\documentclass[10pt,a4paper,reqno]{amsart}
\usepackage{amsthm,amssymb,amsmath,graphicx,enumerate}
\usepackage{mathrsfs,setspace,pstricks,multicol,latexsym}
\usepackage{epsfig, subcaption, graphics}
\usepackage{hyperref}
\usepackage{multirow}
\usepackage{dsfont}
\usepackage{float}
\usepackage{caption}

\numberwithin{equation}{section}
\topmargin -0.3 truein
\oddsidemargin 0.0 truein
\evensidemargin 0.0 truein
\textheight 9 truein
\textwidth 6.5 truein
\footskip 0.6 truein
\headheight 0.0 truein
\headsep 0.3 truein
\parskip 0.5 em

%
\newtheorem{theo}{{\bf{Theorem}}}[section]
\newtheorem{prop}[theo]{{\bf Proposition}}
\newtheorem{lem}[theo]{{\bf Lemma}}

\newtheorem{ex}[theo]{{\bf Example}}

\newtheorem{rem}[theo]{{\bf Remark}}

\newtheorem{remark}[theo]{Remark}
\newtheorem{defi}[theo]{{\bf Definition}}

\newtheorem{notation}[theo]{{\bf Notation}}

\renewcommand{\proof}{\noindent{\bf Proof.\ }}



\catcode`\@=11
\catcode`\@=12

\begin{document}
\title{On Merging of Stochastic Flow of Semi-Markov Dynamics}\thanks{This research was supported in part by NBHM 02011/1/2019/NBHM(RP)R\& D-II/585, DST/INTDAAD/P-12/2020, and DST FIST (SR/FST/MSI-105). The second author would like to acknowledge the support from CSIR SRF. The support and the resources provided by ‘PARAM Brahma Facility’ under the National Supercomputing Mission, Government of India at the Indian Institute of Science Education and Research (IISER) Pune are gratefully acknowledged.}

\author{Anindya Goswami*}
\address{IISER Pune, India}
\email{anindya@iiserpune.ac.in}
\thanks{*Corresponding author}

\author{Ravishankar Kapildev Yadav}
\address{IISER Pune, India}
\email{ravishankar.kapildevyadav@students.iiserpune.ac.in}

\maketitle

\begin{abstract}
\noindent Given a semi-Markov law, using an additional parameter, we consider a family of stochastic flows corresponding to that law. Then we suitably select a particular flow, for which we obtain expressions of the meeting and merging probabilities of a pair of semi-Markov processes, solving the same equation but having two different initial conditions. A set of sufficient conditions are also obtained under which any two solutions of the flow eventually merge with probability one.
\end{abstract}

{\bf AMSC} 60G55, 60H20, 60J90, 60K15

{\bf Key words} Stochastic flow, Semi-Markov Processes, Merging probability, Poisson random measure
\section{Introduction}
\noindent We investigate a modern methodology for the analysis of semi-Markov processes (SMP), whose general theory dates back to the ’50s and ’60s \cite{levyp,smith1955regenerative,takacs1954some}. In a recent work \cite{elliott2020semimartingale} a semimartingale dynamics of semi-Markov chain appears in contrast to the traditional description of a semi-Markov chain in terms of a renewal process. This presentation is however different from that in \cite{Ghosh2009RiskMO,GhoshSaha2011,generalsemi} where another semimartingale representation appears using an integration with respect to a Poisson random measure(PRM). Using the representation of latter type, we have obtained explicit formulae for probabilities of various events related to the stochastic flow of  semi-Markov dynamics.

A comprehensive study of the flow may reveal various aspects of mixing, meeting and coalescence of the dynamics. For example the study of merging of one dimensional Brownian flow goes back to Arratia \cite{arratia1979coalescing}, and Harris \cite{HARRIS1984187} whereas, Melbourne and Terhesiu \cite{AIHP968} have studied mixing for a class of non Markov flows. However, as per our knowledge, questions regarding meeting and merging have not been addressed in the literature for stochastic flow of SMPs. We find the questions related to meeting and merging of multiple semi-Markov particles interesting, as those might help in investigating stability properties of a diffusion that is modulated by a SMP. See for example \cite{basak1999sankhya} for the stability analysis of Markov modulated diffusion.

\noindent In this paper with the help of an additional parameter, given a semi-Markov law, we consider a family of stochastic flows as appears in \cite{generalsemi}. Then we suitably select a particular flow, for which the investigation of meeting and merging becomes convenient. Although the study of meeting and merging event of a finite-state continuous-time Markov chain is straightforward, that is not the case for semi-Markov counterpart. We show with an example, that the meeting time need not be a merging time for a pair of SMPs. We derive the probability of merging at a meeting time. A set of sufficient conditions are also obtained under which a pair of SMPs eventually merge with probability one.

\noindent The rest of this paper is arranged in the following manner. We present a class of homogeneous SMPs as solution of a system of SDEs in Section 2. A combined process of two solutions having different initial conditions are also introduced here. All the basic assumptions, notations and definitions are stated in this section. In Section 3, we derive an expression of the probability that the combined process meets in the next transition. In Section 4, we address some questions about the event of eventual meeting and merging of combined process. Section 5 contains some concluding remark.
\section{Semi-Markov Flow}
\noindent As the study of non-homogeneous or impure SMP is excluded from this paper, from now we will refer to `pure homogeneous SMP' as semi-Markov process or SMP only. Let $(\Omega,\mathcal{F},P)$ be the underlying probability space and $\mathcal{X}$ the state space, a countable subset of $\mathbb{R}$. Endow the set $\mathcal{X}_{2}:=\{(i,j)\in \mathcal{X}^{2}\ \mid  \ i\neq j\}$ with a total order $\prec$. Let $\mathcal{B}(\mathbb{R}^{d})$ denote the Borel $\sigma$-algebra on $\mathbb{R}^{d}$ and $m_{d}$ denote the Lebesgue measure on $\mathbb{R}^{d}.$ Let $\lambda:=(\lambda_{ij})$ denote a matrix in which the $i\textsuperscript{th}$ diagonal element is $\lambda_{ii}(y) :=- \sum_{j\in\mathcal{X}\setminus\{i\}}\lambda_{ij}(y)$
and for each $(i,j)\in \mathcal{X}_2$, $ \lambda_{ij}\colon[0,\infty)\to(0,\infty)$ is a bounded measurable function such that
\begin{enumerate}
\item[{\bf (A1)}] $C:=\sum_{i\in\mathcal{X}}\sum_{j\in\mathcal{X}\setminus\{i\}}\|\lambda_{ij}\|_{\infty}< \infty $, and
\item[{\bf (A2)}] $\lim_{y\rightarrow\infty}\gamma_{i}(y) =\infty$, where $ \gamma_{i}(y):=\int_{0}^{y}\lambda_{i}(y')dy'$,  where
$\lambda_i(y):=|\lambda_{ii}(y)|$.
\end{enumerate}

\noindent For each $(i,j)\in \mathcal{X}_{2},$ we consider another measurable function $\tilde{\lambda}_{ij}\colon[0,\infty)\to (0,\infty)$ and a collection of generic intervals such that $\tilde{\lambda}_{ij}(y)\leq \|\lambda_{ij}\|_{\infty} $ for almost every $y\ge 0$ and
\begin{align} \label{a}
\lambda_{ij}(y)\leq\tilde{\lambda}_{ij}(y), \textrm{ and }
\Lambda_{ij}(y)=\left(\sum_{(i',j')\prec(i,j)}\tilde{\lambda}_{i'j'}(y)\right)\ +\Big[0,\lambda_{ij}(y)\Big)
\end{align}
for each $y\ge 0$, where $a+B=\{a+b\mid b\in B\}$ for $a\in \mathbb{R}, B\subset \mathbb{R}$. From \eqref{a}, it is clear that for every $y\ge 0$, $\{\Lambda_{ij}(y)\colon (i,j)\in \mathcal{X}_{2}\}$ is a collection of disjoint intervals which is denoted by $\Lambda$.
Following \cite{generalsemi}, we define  $h_{\Lambda}$ and $g_{\Lambda}$ on $ \mathcal{X}\times \mathbb{R}_{+} \times \mathbb{R}$ as
	 \begin{align}
	     h_{\Lambda}(i, y, v):&=\sum_{j\in \mathcal{X}\setminus\{i\}}\!(j-i)\mathds{1}_{\Lambda_{ij}(y)}(v) \label{h}\\
	    g_{\Lambda}(i, y, v):&=y\sum_{j\in \mathcal{X}\setminus\{i\}}\!\mathds{1}_{\Lambda_{ij}(y)}(v) \label{g}
	   \end{align}
where $\mathbb{R}_{+}$ denotes the set of non-negative real numbers. Here $\mathds{1}_B$ is the indicator function of the set $B$. We consider the following system of stochastic differential equations in $X$ and $Y$
	   \begin{equation} \label{eqX}
	     X_{t}=X_{0}+\displaystyle \int_{0^{+}}^{t}\int_{\mathbb{R}}h_{\Lambda}(X_{u-}, Y_{u-}, v)\wp(du,dv)
	   	\end{equation}
	   	\begin{equation}\label{eqY}   	    Y_{t}=Y_{0}+t-\displaystyle \int_{0^{+}}^{t}\int_{\mathbb{R}}g_{\Lambda}(X_{u-}, Y_{u-},v)\wp(du, dv)
	   	\end{equation}
for $t>0$, where the domain of integration $\int_{0+}^{t}$ is $(0,t]$, and the PRM $\wp(du,\ dv)$ is on $\mathbb{R}_{+}\times \mathbb{R}$ with intensity $m_{2}(du,dv)$, and defined on the probability space $(\Omega,\mathcal{F},P).$ We also assume that $\{\wp((0,t] \times dv)\}_{t\geq 0}$ is adapted to $\{\mathcal{F}_{t}\}_{t\geq 0},$ a filtration of $\mathcal{F}$ satisfying the usual hypothesis. Evidently, $\tilde{\lambda}$ controls the left end points of the intervals in $\Lambda$ and so can be utilized to regulate relation between solutions to \eqref{eqX}-\eqref{eqY} with  different initial conditions. Indeed a specific choice namely, $\tilde{\lambda}_{ij}=\|\lambda_{ij}\|_{\infty}$ a.e. simplifies the relation between the intervals $\Lambda_{ij}(y)$ with different values of $i$, $j$ and $y$.
In a more general settings Theorem 2.2 and 2.4 of \cite{generalsemi}, assert that the system \eqref{eqX}-\eqref{eqY} has a unique strong solution $(X,Y) =\{(X_{t}, Y_t)\}_{t\ge 0}$. Also, $X$ is semi-Markov (see Definition 1.1. in \cite{generalsemi}) with transition rate $\lambda$, $Y$ is age, and the embedded chain $\{X_{T_n}\}_{n\ge 0}$ is Markov (see Theorem 3.1 \cite{generalsemi}). Furthermore, $(X,Y)$ is strong Markov which is asserted below.
\begin{theo}\label{theo2.1}
Let $Z= (X,Y)=\{(X_{t},\ Y_{t})\}_{t\geq 0}$ be the unique strong solution to  \eqref{eqX}-\eqref{eqY}. Then the process $Z$ is a strong Markov process.
\end{theo}
\begin{proof}
We note that \eqref{h} and \eqref{g} imply that for each $i$ and almost every $y$, $h_{\Lambda}$ and $g_{\Lambda}$ are sums of functions which are non-zero only on the intervals $\Lambda_{ij}(y)$ for $j\in\mathcal{X}\setminus\{i\}$. Furthermore, $\Lambda_{ij}(y)$ is contained in $\left[0,\sum_{\mathcal{X}_2}\|\lambda_{ij}\|_{\infty}\right]$ for each $i,j$ and almost every $y$. Hence the support of integrand in $v$ variable is contained in $\left[0,\sum_{\mathcal{X}_2}\|\lambda_{ij}\|_{\infty}\right]$ which is a finite interval by (A1). Moreover, the only other condition required for applying Theorem IX.3.9 of \cite{cinlar2011probability} (p-475), to (\eqref{eqX}-\eqref{eqY}) is the Lipschitz condition on the diffusion coefficient, which is zero in this case. Hence the process $Z$ is strong Markov using Theorem IX.3.9 of \cite{cinlar2011probability}.
\end{proof}

\begin{notation}\label{N4.1}
Fix $i,j\in \mathcal{X}$ and $y_1,y_2\geq 0$. Let $Z^1=(X^1,Y^1)$ and $Z^2=(X^2,Y^2)$ be the strong solutions of \eqref{eqX}-\eqref{eqY} with initial conditions
\begin{align*}
    X^1_0=i,Y^1_0=y_1, \textrm{ and }      X_0^2=j,Y_0^2=y_2
\end{align*} respectively.
The jump times of $Z:=(Z^1,Z^2)$ is denoted by $\{T_n\}_{n\ge 1}$ and given by $T_0:=0$ and $T_n := \inf\{ t>T_{n-1} \colon t\in  T^1 \cup T^2\}$ for all $n\ge 1$ where $T^l$ denotes the collection of transition times of $X^l$ for each $l=1,2$.
\end{notation}
\noindent  The above notation is adopted henceforth. We impose the following  restriction on $\tilde{\lambda}$. \begin{enumerate}
\item[{\bf (A3)}] For all $(i,j) \in \mathcal{X}_2$, and for almost every $y\ge 0,$ set $\tilde{\lambda}_{ij}(y)=\|\lambda_{ij}\|_{\infty}$.
\end{enumerate}

\begin{remark}
It is evident that the law of $(X,Y)$ does not depend on the choice of $\tilde{\lambda}$ and depends only on the $\lambda$ matrix and the initial position. Hence, (A3) imposes no condition on the laws of $Z^1$ and $Z^2$ separately. However, the law of $Z$ depends on the choice of $\tilde{\lambda}$. Therefore, (A3) selects a specific flow from the family specified in \eqref{a}. We select that, as the absence of (A3) significantly complicates the relations between the intervals $\Lambda_{ij}(y)$ with different values of $i$, $j$ and $y$ and thus ramifies the relation between $Z_1$ and $Z_2$. On the other hand (A3) implies a very simple relation, namely $\cup_{y\ge 0}\Lambda_{ij}(y)$ are disjoint for different values of $i$ and $j$. This helps us to compute expressions of various probabilities related to  meeting and merging times of $X^1$ and $X^2$. This assumption is central for our study.
\end{remark}
\begin{defi}\label{meet_defi}
Let $T$ be an $\{\mathcal{F}_t\}_{t\ge 0}$ stopping time. The time $\tau_T$ of next meeting by the  processes $X^1$ and $X^2$ after $T$ is given by $\tau_T:= \inf\{ t >T: X_{t}^1=X_{t}^2, \min(Y^1_{t}, Y^2_t)=0\}$. We say that $X^1$ and $X^2$ \textbf{meet eventually} if $\{\tau_0 < \infty\}$ occurs. The random time $\tau':=\inf\{t'\ge 0 \mid  X_{t}^1=X_{t}^2,\forall t\geq t'\}$ is called a merging time of $X^1$ and $X^2$. They are said to \textbf{merge} if $\{\tau'<\infty\}$ occurs. Note that $\tau'$, as defined here, is not necessarily a stopping time.
\end{defi}
\noindent The nature of meeting and merging for a semi-Markov family is more involved than those for the Markovian special case. We clarify this in the next section.

\section{Meeting and Merging at the Next Transition}
\noindent Markov pure jump processes, although form a subclass of \eqref{eqX}-\eqref{eqY}, deserve a separate mention due to its importance. Hence we first consider a special case where $\lambda$ is independent of the age variable $y$ and satisfies (A1). Evidently, (A2) holds too. Furthermore, (A3) implies that $\tilde{\lambda}_{ij}(y)=\lambda_{ij}$, a constant function for each $(i,j)\in \mathcal{X}_2$. Hence \eqref{eqX} reduces to
\begin{align}\label{markoveq}
    X_t=X_0+\int_{0^+}^t \tilde{h}(X_{s-},v)\wp(ds,dv)
\end{align}
where $\tilde{h}(i,v):=h_{\Lambda}(i,y,v)=\sum_{j\in \mathcal{X}\setminus\{i\}}(j-i)\mathds{1}_{\Lambda_{ij}(y)}(v)$ is constant in $y$, as the intervals $\Lambda_{ij}(y)$, do not vary with $y$ variable. Uniqueness result of \eqref{markoveq} implies the following.
\begin{theo} \label{theo4.1}
Let $X^1$ and $X^2$ be strong solutions of SDE \eqref{markoveq} with initial states $X_0^1=i$ and $X_0^2=j$ respectively. Then, if $X^1$ and $X^2$ meet, they merge at the first meeting. \end{theo}
\begin{proof} For a $\omega\in \Omega,$ if there exists a $t'>0$ such that $X_{t'}^1(\omega)=X_{t'}^2(\omega)=k$ for some $k\in \mathcal{X}$\footnote{ Note that, it is not necessary that $X^1$ and $X^2$ transit to state $k$ at the same time.}, then using \eqref{markoveq}
for $t>t',$ both $X^1(\omega)$ and $X^2(\omega)$ solve
\begin{align}
  \nonumber  X_t(\omega)&=X_{t'}(\omega)+\int_{t'^{+}}^t \tilde{h}(X_{s-}(\omega),v)\wp(ds,dv) (\omega)= k +\int_{t'^{+}}^t \tilde{h}(X_{s-}(\omega),v)\wp(ds,dv)(\omega).
\end{align}
Now using almost sure uniqueness of the strong solution of the above SDE, $X^1(\omega)$ and $X^2(\omega)$ would be identical from time $t'$ onward. Thus $X^1$ and $X^2$ merge at their first meeting time. \qed
\end{proof}
\noindent It is interesting to note that, if $\lambda$ is constant, the merging time of $X^1$ and $X^2$, as given in Theorem \ref{theo4.1}, is a stopping time. This is because, merging and meeting times coincide, and the latter is a stopping time. This consequence is not valid for a general semi-Markov family. Indeed, if $X^1$ and $X^2$ are as in Notation \ref{N4.1}, at the meeting time they may have unequal ages and those age variables appear in the SDE \eqref{eqX}-\eqref{eqY}. So, the mere uniqueness of the SDE does not imply merging at the first meeting time. We produce below an example of a meeting event which is not the merging of a semi-Markov family.
\begin{ex}\label{ex}
Let $\mathcal{X}=\{1,2\}$, with $(1,2)\prec(2,1)$; also $\lambda_{12}(y)= \lambda_{21}(y)= \frac{y}{1+y}$, and $\tilde{\lambda}_{12}(y)=\tilde{\lambda}_{21}(y)= \sup_{(0,\infty)}|\frac{y}{1+y}|=1$ for all $y\ge 0$. Thus for every $(i,j)\in \mathcal{X}_2$, $\Lambda_{ij}(y)=[i-1, i-1 + \frac{y}{1+y})$. We further assume that $Z^l=(X^l,Y^l)$ is the strong solution of \eqref{eqX}-\eqref{eqY} with above parameters and initial conditions $(X^l_0, Y^l_0)= (l,\mathds{1}_{\{2\}}(l))$ for $l=1,2$ respectively. Now fix a sample $\omega \in \Omega$ such that $\wp(\omega)|_{[0,3/2]\times [0,2]} = \delta_{(1,3/2)}+\delta_{(3/2,1/2)}$, the addition of two Dirac measures at $(1,3/2)$ and $(3/2,1/2)$ respectively.
\begin{figure}[ht]
\centering
\includegraphics[width=0.6\linewidth]{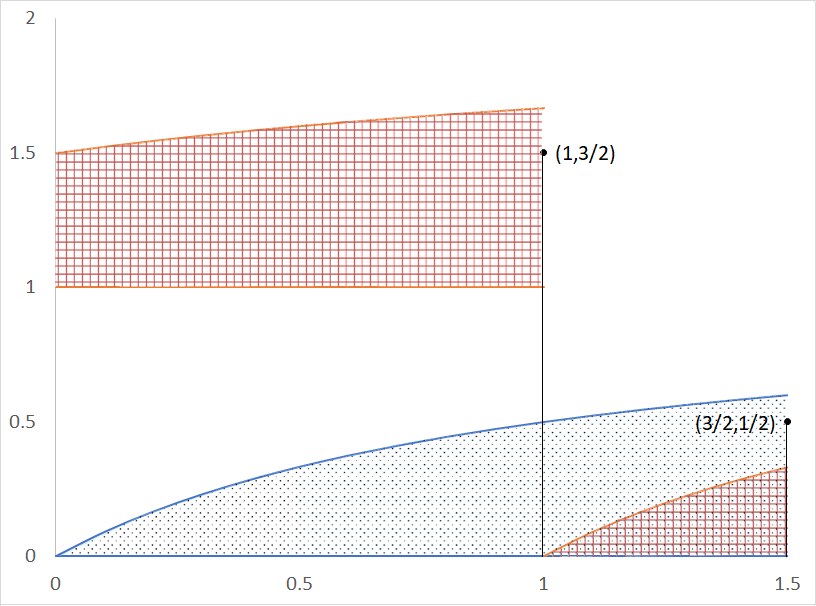} \caption{The $t$ and $v$ variables are plotted along horizontal and vertical axes. The point masses are shown by black dots. The intervals relevant for transitions of the first and second processes are plotted vertically and shown in blue and red respectively.}\label{exfig} \end{figure}
Then none of the processes has transition until time $t=1$. Hence, for both $l=1,2$,
\begin{align*}
X^l_{1-}(\omega) = l, \, \textrm{ and, }\, Y^l_{1-} (\omega)= Y^l_{0}+1- \int_{(0,1)}\int_{\mathbb{R}}g_{\Lambda}(X^l_{u-}(\omega), Y^l_{u-}(\omega),v)\wp(du, dv)(\omega)= \mathds{1}_{\{2\}}(l) +1 =l.
\end{align*}
Then from \eqref{eqX}
\begin{align*}
X^l_{1}(\omega) =& X^l_{1-} (\omega) +\int_{\mathbb{R}}h_{\Lambda}(X^l_{1-}(\omega), Y^l_{1-}(\omega), v)\wp(\{1\}, dv)(\omega)=  l + h_{\Lambda}(l,l,3/2).
\end{align*}
Therefore, using \eqref{h} and the intervals $\Lambda_{12}(1), \Lambda_{21}(2)$, we get $X^1_1(\omega)=1+ (2-1)\mathds{1}_{[0, 1/2)}(3/2)= 1$ and $X^2_1(\omega)= 2 + (1-2)\mathds{1}_{[1, 1+ 2/3)}(3/2)= 1$.
Thus, $t=1$ is a meeting time. However, this is not a merging time, because at $t=3/2$, $X^1(\omega)$ and $X^2(\omega)$ separate, which is shown below. We note that until $t=3/2$, $X^1(\omega)$ and $X^2(\omega)$ are at state 1 since $t=0$, and $t=1$ respectively. So, while the pre-transition state $X^l_{3/2-}(\omega)$ is $1$ for each $l=1,2$, the pre-transition ages  $Y^1_{3/2-}(\omega)$, and $Y^2_{3/2-}(\omega)$ are $3/2$ and $1/2$ respectively. Consequently,
\begin{align*}
X^l_{3/2}(\omega) =& 1 +\int_{\mathbb{R}} h_{\Lambda}(1, Y^l_{3/2-}(\omega),v)\wp(\{3/2\}, dv)(\omega)
= 1+ \mathds{1}_{\Lambda_{1 2}(Y^l_{3/2-}(\omega))}(1/2)
= \begin{cases}
2, & \mbox{for } l=1 \\
1, & \mbox{for } l=2
\end{cases}
\end{align*}
since, $1/2 \in \Lambda_{1 2}(3/2)=[0,\frac{3/2}{1+3/2})=[0,3/5)$ and $1/2 \notin \Lambda_{1 2}(1/2)=[0,\frac{1/2}{1+1/2})=[0,1/3)$.
\end{ex}

\begin{theo}\label{theoSM_meet} Assume (A3). Let $Z^1=(X^1,Y^1)$ and $Z^2=(X^2,Y^2)$ be as in Notation \ref{N4.1} where $i\neq j$. The probability of $X^1$ and $X^2$ meeting in the next transition is
$$\int_0^\infty e^{-\int_0^y(\lambda_i(y_1+t)+\lambda_j(y_2+t))dt}(\lambda_{ij}(y_1+y)+\lambda_{ji}(y_2+y))dy.$$
\end{theo}
\proof In this proof we will utilise that for every $y'\ge 0$ and $y''\ge 0$,  $\cup_{k\neq i}\Lambda_{ik}(y')$ is disjoint to $\cup_{k\neq j}\Lambda_{jk}(y'')$ when $i\neq j$. This is consequence of definitions of the intervals in \eqref{a}, and (A3). Non-meeting event in the next transition of $X^1$ and $X^2,$ happens in two ways.\\
Case 1: $X^1$ has the first transition to a state which is different from $X^2_0$ before $X^2$ transits for the first time. This event can be written as $\mathcal{E}:=\{X^1_{T_1-}\neq X^1_{T_1},X^1_{T_1}\neq X^2_{T_1}\}$. We will make use of $ P(\mathcal{E}\mid \mathcal{F}_{0})=E[P(\mathcal{E}\mid T_1)\mid \mathcal{F}_0]$, and the expression of conditional density $\eta_{T_1}$ of $T_1$ given $X^1_{T_0}=i,X^2_{T_0}=j,Y^1_{T_0}=y_1,Y^2_{T_0}=y_2$. Clearly, $P(\mathcal{E} |T_1=y)$ is $\frac{m_1( \underset{k\notin \{i,j\}}{\cup} \Lambda_{ik}(y_1+y))}{m_1( \underset{k\notin \{i\}}{\cup} \Lambda_{ik}(y_1+y) \cup \underset{k\notin \{j\}}{\cup} \Lambda_{jk}(y_2+y))}$
$=\frac{\lambda_i(y_1+y)-\lambda_{ij}(y_1+y)}{\lambda_{i}(y_1+y)+\lambda_j(y_2+y)}$. Moreover, $\eta_{T_1}(y)=e^{-m_2(B)}(\lambda_i(y_1+y)+\lambda_j(y_2+y))$, where $B:= \underset{t\in[0,y)} {\cup} \left(\{t\} \times \left((\underset{k\notin \{i\}}{\cup} \Lambda_{ik}(y_1+t)) \cup(\underset{k\notin \{j\}}{\cup} \Lambda_{jk} (y_2+t))\right)\right).$ Indeed, the event of no transition of $X^1$ and $X^2$ until first $y$ unit of time, is equivalent to $\{\wp(B)=0\}$,
the non-occurrence of Poisson point mass in $B$.
Clearly, $P(\{\wp(B)=0\}\mid X^1_{T_0}=i,X^2_{T_0}=j,Y^1_{T_0}=y_1,Y^2_{T_0}=y_2)$ is equal to $e^{-m_2(B)}$, and $m_2(B) =\int_0^y(\lambda_i(y_1+t)+\lambda_j(y_2+t))dt$. Hence
\begin{align}
P(\mathcal{E}\mid \mathcal{F}_0)=&\int_{0}^{\infty} P(\mathcal{E}\mid T_1=y)\eta_{T_1}(y)dy\nonumber \\
=&\int_{0}^{\infty} e^{-\int_0^y(\lambda_i(y_1+t)+\lambda_j(y_2+t))dt}[\lambda_i(y_1+y)-\lambda_{ij}(y_1+y)]dy. \label{part_1}
\end{align}
Similarly for Case 2, i.e., $X^2$ has the first transition to a state, different from $X^1_0$, before $X^1$ transits for the first time is given by,
\begin{align}
    P(X^2_{T_1-}\neq X^2_{T_1},X^2_{T_1}\neq X^1_{T_1}\mid \mathcal{F}_0)
    &=\int_0^\infty e^{-\int_0^y(\lambda_i(y_1+t)+\lambda_j(y_2+t))dt}[\lambda_j(y_2+y)-\lambda_{ji}(y_2+y)]dy\label{part_2}.
\end{align}
Hence the total probability (denoted by $a'_{(i,j,y_1,y_2)}$) of not meeting in the next transition is sum of the probabilities appearing in  \eqref{part_1}, and \eqref{part_2}. \\Using $\phi_1(y) := e^{-\int_0^y(\lambda_i(y_1+t)+\lambda_j(y_2+t))dt}\left(\lambda_i(y_1+y)+\lambda_j(y_2+y)\right)$,
\begin{align}\label{total2}
    a'_{(i,j,y_1,y_2)}&=\int_0^\infty\left( \phi_1(y) - e^{-\int_0^y(\lambda_i(y_1+t)+\lambda_j(y_2+t))dt}\left[(\lambda_{ij}(y_1+y)+\lambda_{ji}(y_2+y))\right]\right)dy \\
  \nonumber  &=1-\int_0^\infty e^{-\int_0^y(\lambda_i(y_1+t)+\lambda_j(y_2+t))dt}{(\lambda_{ij}(y_1+y)+\lambda_{ji}(y_2+y))}dy
\end{align}
as $\int_0^\infty \phi_1(y) dy =1$. Hence $1-a'_{(i,j,y_1,y_2)}$, the probability of meeting of $X^1$ and $X^2$ in the next transition has the desired expression.\qed
\begin{defi}
Let $Z^1=(X^1,Y^1)$ and $Z^2=(X^2,Y^2)$ be the strong solutions of \eqref{eqX}-\eqref{eqY} with two different sets of initial conditions. Let $\mathcal{P}(k,y)$ denote the regular conditional probability of merging of $X^1$ and $X^2$ at a meeting time given meeting occurred in finite time,  $k$ is the meeting state, and $y$ is the age of the chain which arrives at $k$ prior to the meeting time.
\end{defi}
\begin{theo}\label{theo4.9}
Assume (A3). Then
   \begin{align}
        \mathcal{P}(k,y)=\int_{0}^\infty e^{-\int_{0}^{y'}\underset{k'\neq k}{\sum}(\lambda_{kk'}(t)\vee\lambda_{kk'}(y+t))dt}\left[\sum_{k'\neq k}\lambda_{kk'}(y')\wedge\lambda_{kk'}(y+y')\right]dy'
   \end{align}
where $a\vee b =\max(a,b)$ and $a\wedge b =\min(a,b)$.
\end{theo}
\proof Let $t'$ denote a meeting time of
$X^1$ and $X^2$. It is given that $t'$ is finite, with $X^1_{t'}=k =X^2_{t'}$,  $Y^1_{t'} \wedge Y^2_{t'}=0$ and $Y^1_{t'}\vee Y^2_{t'}=y>0$.
Let $\vartheta$ denote the duration both the processes stay at $k$ before either of them transit to some other state.
Clearly, the event of no transition of $X^1$ and $X^2$ for next $y'$ unit of time after $t'$, is equivalent to the event where no Poisson point mass belongs to the set $B:= \underset{t\in[0,y')} {\cup} \left(\{t'+t\} \times \underset{k'\notin \{k\}}{\cup} (\Lambda_{kk'}(t) \cup\Lambda_{kk'} (y+t))\right)$.
Evidently, this event occurs with probability $e^{-m_2(B)}$, as $m_2$ is the intensity of the Poisson random measure.
Since, simultaneous occurrence of this event and the event of a Poisson point mass lying on the line segment $\{t'+y'\} \times \underset{k'\notin \{k\}}{\cup} (\Lambda_{kk'}(y') \cup\Lambda_{kk'} (y+y'))$ is equivalent to the occurrence of $\{\vartheta =y'\}$, the expression of conditional density $\eta_{\vartheta}$ of $\vartheta$ is given by $\eta_{\vartheta}(y')=e^{-m_2(B)}m_1(\underset{k'\notin \{k\}}{\cup} A_{k'}(y'))$, where $A_{k'}(y'):= \Lambda_{kk'}(y')\cup  \Lambda_{kk'}(y+y')$ for every $k'\neq k$.
As $\tilde{ \lambda}_{i'j'}(y)$ is set as constant $\|\lambda_{i'j'}\|_{\infty}$ for almost every $y$ (Assumption (A3)), due to the definitions of the intervals in \eqref{a}, for almost every $y\ge 0$ and $y'\ge 0$ the collection $\{A_{k'}(y')\}_{k'\in \mathcal{X}\setminus\{k\}}$ is disjoint. Moreover, due to (A3) the left end points of the intervals  $\Lambda_{kk'}(y')$ and $\Lambda_{kk'}(y+y')$ are common (see \eqref{a}). Thus the Lebesgue measures of $\Lambda_{kk'}(y') \cup\Lambda_{kk'} (y+y')$ and $\Lambda_{kk'}(y') \cap\Lambda_{kk'} (y+y')$ are $\lambda_{kk'}(y')\vee\lambda_{kk'}(y+y')$ and $\lambda_{kk'}(y')\wedge\lambda_{kk'}(y+y')$ respectively. Thus $\eta_{\vartheta}(y')=\exp({-\int_{0}^{y'}\underset{k'\neq k}{\sum}(\lambda_{kk'}(t)\vee\lambda_{kk'}(y+t))dt}) \sum_{k'\neq k}\lambda_{kk'}(y')\vee\lambda_{kk'}(y+y').$

We consider two cases regarding the transition of $X^1$ and $X^2$, at $t' + \vartheta$ which are (i) simultaneous, and (ii) non-simultaneous. Case (ii) implies that $X^1$ and $X^2$ will depart in the next transition. So, under case (ii), $t'$ is not a merging time. Consequently, case (i) is necessary for $t'$ to be the merging time. We show that case (i) is a sufficient condition too. We recall that at $t'+\vartheta$ the Poisson point mass (which is responsible for the transition) lies in only one of the members of the disjoint family $\{A_{k'}(\vartheta)\}_{k'}$ with probability one. Therefore, under case (i), at $t'+\vartheta$, $X^1$ and $X^2$ enter into an identical state and the ages $Y^1$ and $Y^2$ become zero and therefore, the uniqueness of the SDE \eqref{eqX}-\eqref{eqY} implies merging at time $t'$.

For case (i) to occur, the point mass must lie in $\{t'+\vartheta\} \times \cup_{k'\notin \{k\}} \Lambda_{kk'}(\vartheta)\cap  \Lambda_{kk'}(y+\vartheta)$. On the other hand if the point mass lies in $\{t'+\vartheta\} \times  \cup_{k'\notin \{k\}} \left( A_{k'}(\vartheta)\setminus \Lambda_{kk'}(\vartheta)\cap  \Lambda_{kk'}(y+\vartheta)\right)$, then the transition at $t'+\vartheta$ is of case (ii).
Hence, for almost every $y$ and $y'$, the conditional probability of merging given $\vartheta=y'$
is equal to $\frac{\sum_{k'\neq k}\lambda_{kk'}(y')\wedge\lambda_{kk'}(y+y')}{\underset{k'\neq k}{\sum} (\lambda_{kk'}(y')\vee\lambda_{kk'}(y+y'))}$.
Thus $P\left( X^1_t=X^2_t,\forall t\geq t' \mid \{t'<\infty\}, X^1_{t'} = X^2_{t'} = k, Y^1_{t'} \wedge Y^2_{t'}=0, Y^1_{t'}\vee Y^2_{t'}=y\right)$ is equal to
\begin{align*}
&\int_{0}^{\infty}P\left(X^1_t=X^2_t,\forall t\geq t' \mid  \{t'<\infty\}, X^1_{t'} = X^2_{t'} = k, Y^1_{t'} \wedge Y^2_{t'}=0, Y^1_{t'}\vee Y^2_{t'}=y, \vartheta=y'\right)\eta_{\vartheta}(y')dy'\\
    &=\int_{0}^{\infty} e^{-\int_{0}^{y'}\underset{k'\neq k}{\sum}(\lambda_{kk'}(t)\vee\lambda_{kk'}(y+t))dt}\left(\sum_{k'\neq k}\lambda_{kk'}(y')\wedge\lambda_{kk'}(y+y')\right)dy'.
\end{align*}
\noindent
 This completes the proof. \qed
\begin{rem}
It is interesting to note that for Markov special case, where the transition rate matrix $\lambda$ is independent of the age variable $y$, a direct calculation gives that $\mathcal{P}(k,y) =1$. This makes Theorem \ref{theo4.1}, a corollary of the above theorem. On the other hand by considering the two-state semi-Markov chain given in Example \ref{ex}, one can obtain for each $k=1,2$, $\lim_{y\to \infty} \mathcal{P}(k,y) =\int_0^\infty e^{-y'}\frac{y'}{1+y'}dy'< \frac{1}{2} \int_0^1 e^{-y'} dy'+\int_1^\infty e^{-y'}dy'= \frac{1+e^{-1}}{2}<1$. This further clarifies that a meeting time for the flow in Example \ref{ex} need not be a merging time. Below we show that the chance of merging for a general semi-Markov chain increases to 1 as $y$ decreases to zero, provided that the transition rate is continuous at zero.
\end{rem}

\begin{prop}
Assume (A3) and that $y\mapsto \lambda(y)$ is continuous at zero.  As $y$ tends to zero, $\mathcal{P}(k,y)$ converges to $1$.
\end{prop}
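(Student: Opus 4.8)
The plan is to start from the closed form in Theorem \ref{theo4.9} and recast $1-\mathcal{P}(k,y)$ as the integral of an $L^1$-translation difference against a sub-probability weight, and then split the $y'$-integral into a head and a tail. For fixed $y>0$ I would set
\[
\Phi_y(y'):=\sum_{k'\neq k}\lambda_{kk'}(y')\vee\lambda_{kk'}(y+y'),\qquad \psi_y(y'):=\sum_{k'\neq k}\lambda_{kk'}(y')\wedge\lambda_{kk'}(y+y'),\qquad G_y(y'):=\int_0^{y'}\Phi_y(t)\,dt,
\]
so that $\mathcal{P}(k,y)=\int_0^\infty e^{-G_y(y')}\psi_y(y')\,dy'$. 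Since $\Phi_y(t)\ge\sum_{k'\neq k}\lambda_{kk'}(t)=\lambda_k(t)$, assumption (A2) gives $G_y(y')\ge\gamma_k(y')\to\infty$, whence $-\frac{d}{dy'}e^{-G_y(y')}=e^{-G_y(y')}\Phi_y(y')$ integrates to exactly $1$ (this is just $\int_0^\infty\eta_\vartheta(y')\,dy'=1$). Using $a\vee b-a\wedge b=|a-b|$, this yields the clean identity
\[
1-\mathcal{P}(k,y)=\int_0^\infty e^{-G_y(y')}\big(\Phi_y(y')-\psi_y(y')\big)\,dy'=\int_0^\infty e^{-G_y(y')}\sum_{k'\neq k}\big|\lambda_{kk'}(y')-\lambda_{kk'}(y+y')\big|\,dy',
\]
and the goal becomes to show the right-hand side tends to $0$ as $y\to0$.

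Next I would fix $\delta>0$ and split the integral at $\delta$. For the tail I would use $\Phi_y-\psi_y\le\Phi_y$ together with $\int_\delta^\infty e^{-G_y(y')}\Phi_y(y')\,dy'=e^{-G_y(\delta)}\le e^{-\gamma_k(\delta)}$, a bound uniform in $y$ that, by (A2), tends to $0$ as $\delta\to\infty$. This is the crucial step: although the $\lambda_{kk'}$ need not be integrable on $[0,\infty)$, comparing the max-intensity $\Phi_y$ with its lower bound $\lambda_k$ produces a $y$-uniform exponentially small tail. For the head I would discard the weight via $e^{-G_y}\le1$, giving $\int_0^\delta(\Phi_y-\psi_y)\,dy'\le\sum_{k'\neq k}\int_0^\delta|\lambda_{kk'}(y')-\lambda_{kk'}(y+y')|\,dy'$; by (A1) the series $\sum_{k'\neq k}\|\lambda_{kk'}\|_\infty\le C$ is finite, so dominated convergence over the countable index $k'$ lets me pass $\lim_{y\to0}$ inside the sum, while each summand tends to $0$ by continuity of translation in $L^1$ on the finite interval $[0,\delta]$ (each $\lambda_{kk'}$ is bounded and measurable, hence integrable there). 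The continuity of $\lambda$ at $0$ assumed in the statement is comfortably enough for this. Combining the two estimates, given $\varepsilon>0$ I would choose $\delta$ with $e^{-\gamma_k(\delta)}<\varepsilon/2$ and then $y$ small enough that the head is $<\varepsilon/2$, which proves $1-\mathcal{P}(k,y)\to0$.

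The main obstacle, and the point requiring care, is the uniform-in-$y$ control of the tail: a naive dominated-convergence argument on the original integral stumbles because $e^{-\gamma_k}$ need not lie in $L^1([0,\infty))$, and because $\lambda_{kk'}$ need only be continuous at $0$, so $\lambda_{kk'}(y+y')\to\lambda_{kk'}(y')$ can fail at interior discontinuities. The head/tail split sidesteps both difficulties: the tail rests on the exact relation $\int_0^\infty e^{-G_y(y')}\Phi_y(y')\,dy'=1$ and the monotone lower bound $\Phi_y\ge\lambda_k$, while the head only ever involves integration over a fixed finite interval, where $L^1$-translation-continuity applies with no pointwise-convergence hypothesis. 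If one instead prefers a single dominated-convergence argument, the integrand $e^{-G_y(y')}\psi_y(y')$ can be dominated by the integrable function $e^{-\gamma_k(y')}\lambda_k(y')$ (of integral $1$), but that route genuinely needs a.e. continuity of $\lambda$ to get pointwise convergence, which is why I favour the split.
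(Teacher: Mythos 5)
Your proof is correct, but it is a genuinely different argument from the one in the paper. The paper works directly on the formula of Theorem \ref{theo4.9} and applies the dominated convergence theorem: the integrand is dominated by $\psi(y')=\lambda_k(y')e^{-\gamma_k(y')}$ (which integrates to $1$ by (A2)), and is claimed to converge pointwise to $\psi$ as $y\to 0$ ``due to the continuity of $\lambda$''. You instead write $1-\mathcal{P}(k,y)=\int_0^\infty e^{-G_y(y')}\sum_{k'\neq k}|\lambda_{kk'}(y')-\lambda_{kk'}(y+y')|\,dy'$ via the exact normalisation $\int_0^\infty e^{-G_y}\Phi_y\,dy'=1$, and then split at $\delta$: the tail is bounded by $e^{-\gamma_k(\delta)}$ uniformly in $y$, and the head is handled by $L^1$-continuity of translation on a finite interval together with (A1) to sum over $k'$. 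What your route buys is robustness: the paper's pointwise-convergence step really requires $\lambda_{kk'}(y+y')\to\lambda_{kk'}(y')$ at (almost) every $y'>0$, i.e.\ a.e.\ right-continuity of the rates, which is strictly more than the stated hypothesis of continuity at zero; your argument needs no continuity hypothesis at all (bounded measurability suffices), so it proves the proposition under weaker assumptions and, as a by-product, repairs this small gap. The paper's approach is shorter and gives the limit of the integrand explicitly, which is convenient when the rates are genuinely continuous; your decomposition is the one to use when they are not, and your closing remark correctly identifies exactly where the single-DCT route needs the extra regularity.
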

\begin{proof}
 From Theorem \ref{theo4.9}
\begin{align}
  \nonumber  \lim_{y\to 0}\mathcal{P}(k,y)&=\lim_{y\to 0}\int_0^{\infty}e^{-\int_{0}^{y'}\underset{k'\neq k}{\sum}(\lambda_{kk'}(t)\vee\lambda_{kk'}(y+t))dt}\left(\sum_{k'\neq k}\lambda_{kk'}(y')\wedge\lambda_{kk'}(y+y')\right)dy'.
\end{align}
Due to the continuity of $\lambda$ in $y$, the integrand converges pointwise to
$$\psi(y'):= \sum_{k'\neq k}\lambda_{kk'}(y') \exp\left({-\int_{0}^{y'} {\sum_{k'\neq k}}\lambda_{kk'}(t)dt}\right).$$
The integrand is also uniformly dominated by $\psi$, which is integrable on $[0,\infty)$. Indeed $\int_{[0,\infty)}\psi(y') dy'= 1$. Thus the result follows using dominated convergence theorem. \qed
\end{proof}

\section{Eventual Meeting and Merging}
It is important to note that the strict positivity of entries of the rate matrix, as assumed in this paper, implies irreducibility of the process. It is also known that mere irreducibility of a Markov chain does not ensure the convergence. However, the meeting event of two chains may take place even if the chains do not converge. The discrete time Markov chain on two states having zero probability of transition to the same state constitutes an example where chains with different initial states never meet due to its periodicity. Nevertheless, the same phenomena is untrue for its continuous time version. Indeed, if two such chains (Markov/semi-Markov), having bounded transition rate and driven by the same noise (the Poisson random measure) start from two different states, they meet surely at the next transition. In this paper, due to the consideration of processes having bounded transition rate, the discrete time scenario is excluded. Thus an ergodicity assumption is not needed for assuring eventual meeting. The next theorem establishes eventual meeting of Markov special case under finiteness assumption of the state space.
\begin{theo}\label{theoM_meet}
Let $X^1$ and $X^2$ be as in Theorem \ref{theo4.1}.
\begin{enumerate}
    \item The conditional probability of meeting in the next transition given $\mathcal{F}_0$ is $\frac{(\lambda_{ij}+\lambda_{ji})}{\lambda_{i}+\lambda_{j}}$.
    \item If $\mathcal{X}$ is finite,  $X^1$ and $X^2$ eventually meet with probability 1.
\end{enumerate}
\end{theo}
\proof
Recall $X^1_0=i$, $X^2_0=j$, and the sequence $\{T_n\}$ from Notation \ref{N4.1}. By applying Theorem \ref{theoSM_meet} for the Markov special case, we can write the conditional probability of meeting in the next transition of $X^1$ and $X^2$, given the initial conditions as
\begin{align*}
\int_0^{\infty}e^{-\int_0^y (\lambda_{i}+\lambda_{j})dt} (\lambda_{ij}+\lambda_{ji})dy = \frac{(\lambda_{ij} + \lambda_{ji})} {(\lambda_{i}+\lambda_{j})} \int_0^{\infty}e^{-(\lambda_{i}+\lambda_{j})y} (\lambda_{i}+\lambda_{j})dy= \frac{(\lambda_{ij} + \lambda_{ji})} {(\lambda_{i}+\lambda_{j})}.
\end{align*}
Hence the part (1) is proved. Since, $\lambda_{ij}>0$ for all $i\neq j$ and $\mathcal{X}$ is finite, $\min_{i,j}\frac{(\lambda_{ij}+\lambda_{ji})}{\lambda_{i}+\lambda_{j}}>0$. Thus $\max_{i,j} a_{(i,j)}<1$ where $a_{(i,j)}$ denotes the probability of not meeting in the next transition. Now since $\{T_n\}$ is a sequence of stopping times, using Theorem \ref{theo2.1}, we get
\begin{align}\label{aij}
E\left[ \mathds{1}_{\{X^1_{T_n}\neq X^2_{T_n}\}}\mid \mathcal{F}_{T_{n-1}}\right]= a_{(X^1_{T_{n-1}},X^2_{T_{n-1}})} \le \max_{i,j} a_{(i,j)}<1.
\end{align}
\noindent The event of never meeting of processes $X^1$ and $X^2$ is identical to the repeated occurrence of $\{X^1_{T_n}\neq X^2_{T_n}\}$ for all $n\ge 1$.
Hence, using the fact (thanks to (A2)) that the chains experience infinitely many transitions with probability 1, the probability of never meeting, $P\left(X^1_t\neq X^2_t,\forall t\ge 0\mid \mathcal{F}_{0}\right)$ matches with $\lim_{N\to \infty}P\left(\cap_{n=1}^{N} \{X^1_{T_n}\neq X^2_{T_n}\}\mid \mathcal{F}_{0}\right)$. Next if
\begin{align}\label{repeat}
E\left[ \prod_{n=1}^N \mathds{1}_{\{X^1_{T_n}\neq X^2_{T_n}\}}\mid \mathcal{F}_{0}\right]
\le \max_{i,j} a_{(i,j)}
E\left[ \prod_{n=1}^{N-1} \mathds{1}_{\{X^1_{T_n}\neq X^2_{T_n}\}}\mid \mathcal{F}_{0}\right],
\end{align}
holds for all $N\ge 1$, using that repeatedly, we get
$$ P\left(\cap_{n=1}^{N} \{X^1_{T_n}\neq X^2_{T_n}\}\mid \mathcal{F}_{0}\right) = E\left[ \prod_{n=1}^N \mathds{1}_{\{X^1_{T_n}\neq X^2_{T_n}\}}\mid \mathcal{F}_{0}\right] \le \left(\max_{i,j} a_{(i,j)}\right)^N
$$
for all $N\ge 1$. The right side clearly vanishes as $N\to \infty$, and thus $P\left(X^1_t\neq X^2_t,\forall t\ge 0\mid \mathcal{F}_{0}\right)$ is zero as desired, provided \eqref{repeat} holds. Finally \eqref{repeat} is shown using \eqref{aij} below
\begin{align*}
E\left[ \prod_{n=1}^N \mathds{1}_{\{X^1_{T_n}\neq X^2_{T_n}\}}\mid \mathcal{F}_{0}\right]
&= E\left[E\left[ \prod_{n=1}^N \mathds{1}_{\{X^1_{T_n}\neq X^2_{T_n}\}}\mid \mathcal{F}_{T_{N-1}}\right]\mid \mathcal{F}_{0}\right]\\
&= E\left[\left(\prod_{n=1}^{N-1}\mathds{1}_{\{X^1_{T_n}\neq X^2_{T_n}\}}\right)E\left[\mathds{1}_{\{X^1_{T_N}\neq X^2_{T_N}\}}\mid \mathcal{F}_{T_{N-1}}\right]\mid \mathcal{F}_{0}\right]\\
&\le \max_{i,j} a_{(i,j)}
E\left[\prod_{n=1}^{N-1} \mathds{1}_{\{X^1_{T_n}\neq X^2_{T_n}\}}\mid \mathcal{F}_{0}\right]
\end{align*}
for all $N\ge 1$. Hence the proof of part(2) is complete. \qed

\noindent In the above proof, the second part of the theorem has been proved using the first part. However, the former has been proved in Lemma 3.5 of \cite{basak1999sankhya} without utilizing part 1, under identical assumption in a different approach.

\noindent It is important to note that for ensuring almost sure eventual meeting, we have assumed finiteness of $\mathcal{X}$ in the above theorem, whereas in the proof we have used $\min_{i,j}\frac{(\lambda_{ij}+\lambda_{ji})}{\lambda_{i}+\lambda_{j}}>0$ only. In the following lemma we show that under (A1), these conditions are equivalent.
\begin{lem} \label{lem4.2} Let $\lambda$ be a transition rate matrix of a Markov chain obeying (A1). If $\mathcal{X}$ is infinite, $\underset{i,j}{\inf}\frac{\left(\lambda_{ij}+\lambda_{ji}\right)}{\lambda_{i}+\lambda_{j}}$ is zero.
\end{lem}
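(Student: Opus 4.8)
The plan is to reduce the two-index infimum to a single-index one and then invoke the summability that (A1) forces on the off-diagonal rates. Since we are in the Markov setting of Theorem \ref{theo4.1}, every $\lambda_{ij}$ is a constant, so $\|\lambda_{ij}\|_\infty=\lambda_{ij}$ and (A1) reads $\sum_{i\in\mathcal{X}}\sum_{j\neq i}\lambda_{ij}=C<\infty$; moreover $\lambda_i=\sum_{j\neq i}\lambda_{ij}$ by the definition of the diagonal entries. I would first fix an arbitrary reference state $i_0\in\mathcal{X}$ and observe that
$$\inf_{(i,j)}\frac{\lambda_{ij}+\lambda_{ji}}{\lambda_i+\lambda_j}\le \inf_{j\neq i_0}\frac{\lambda_{i_0 j}+\lambda_{j i_0}}{\lambda_{i_0}+\lambda_j},$$
so it suffices to show that the right-hand infimum vanishes.

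For the key estimate, note that the numerators are summable: $\sum_{j\neq i_0}\big(\lambda_{i_0 j}+\lambda_{j i_0}\big)\le \lambda_{i_0}+C<\infty$, the first block being $\sum_{j\neq i_0}\lambda_{i_0 j}=\lambda_{i_0}$ and the second being bounded by $C$ via (A1). Because $\mathcal{X}$ is countably infinite, I would enumerate the states $j\neq i_0$ as $j_1,j_2,\dots$; the convergence of the above series then forces $\lambda_{i_0 j_n}+\lambda_{j_n i_0}\to 0$ as $n\to\infty$. On the other hand the denominator stays bounded away from zero, since $\lambda_{i_0}+\lambda_{j_n}\ge \lambda_{i_0}>0$ (strict positivity of the off-diagonal entries gives $\lambda_{i_0}=\sum_{j\neq i_0}\lambda_{i_0 j}>0$). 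Hence
$$0\le \frac{\lambda_{i_0 j_n}+\lambda_{j_n i_0}}{\lambda_{i_0}+\lambda_{j_n}}\le \frac{\lambda_{i_0 j_n}+\lambda_{j_n i_0}}{\lambda_{i_0}}\longrightarrow 0,$$
which shows that the single-index infimum, and therefore $\inf_{(i,j)}\frac{\lambda_{ij}+\lambda_{ji}}{\lambda_i+\lambda_j}$, equals zero.

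I do not expect a genuine obstacle here: the entire content is the tension between a convergent tail of off-diagonal rates (from (A1)) forcing the numerator to vanish and a denominator bounded below by the fixed positive number $\lambda_{i_0}$. The only points requiring care are confirming that the numerator series is indeed dominated by $\lambda_{i_0}+C$ (one uses both that the row sum $\sum_{j\neq i_0}\lambda_{i_0 j}$ equals $\lambda_{i_0}$ and that the column sum $\sum_{j\neq i_0}\lambda_{j i_0}$ is a part of the doubly-indexed sum defining $C$), and that $\lambda_{i_0}>0$ so that the denominator cannot degenerate simultaneously. An alternative, essentially equivalent, formulation would assume for contradiction that the infimum equals some $\delta>0$, deduce $\lambda_{i_0 j}+\lambda_{j i_0}\ge \delta\lambda_{i_0}$ for every $j\neq i_0$, and then contradict summability by summing this lower bound over the infinitely many $j$.
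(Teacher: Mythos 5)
Your proof is correct and follows essentially the same route as the paper's: fix one state, send the other to infinity, and use the summability furnished by (A1) (row sum $\lambda_{i_0}$ plus the global bound $C$) to force the numerator to vanish while the denominator stays bounded below by the fixed state's positive rate $\lambda_{i_0}$. The paper merely swaps the roles of the two indices and phrases the tail estimate with two separate $\epsilon$-arguments instead of your single term-goes-to-zero observation, so the two arguments are interchangeable.
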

\proof
Fix a $j\in \mathcal{X}$. Since, due to Assumption (A1), $\sum_{i=1}^\infty\lambda_i<\infty$,   given $\epsilon>0$  there exists an $i_{\epsilon,j}$  such that  $\lambda_i<\epsilon\lambda_j$ $\forall i>i_{\epsilon,j}.$ So we get an inequality $\lambda_{ij}\le \lambda_i<\epsilon\lambda_j$ for all $i>i_{\epsilon,j}$. Using this inequality we have the following relation,
\begin{equation}\label{relation1}
    \frac{\lambda_{ij}+\lambda_{ji}}{\lambda_i+\lambda_j}<\frac{\epsilon\lambda_j+\lambda_{ji}}{\lambda_i+\lambda_j}<\frac{\epsilon\lambda_j+\lambda_{ji}}{\lambda_j}=\epsilon+\frac{\lambda_{ji}}{\lambda_j},
\end{equation}
for all $i>i_{\epsilon,j}$.
For each $j$ we also have  $\lambda_j= \underset{i\in \mathcal{X}\setminus \{j\}}{\sum}\lambda_{ji} <\infty$. Hence, there exists a $i^*_{j,\epsilon}$ such that for all $i>i^*_{j,\epsilon}$ we have $\lambda_{ji}<\epsilon\lambda_j$. Now, using \eqref{relation1}, we get for each $i>\max(i_{\epsilon,j},i^*_{j,\epsilon}),$
\begin{align*}\label{relation3}
    \frac{\lambda_{ij}+\lambda_{ji}}{\lambda_i+\lambda_j}<\epsilon+\frac{\epsilon\lambda_j}{\lambda_j}=2\epsilon.
\end{align*} Since $\epsilon$ is arbitrary, the above implies that for each $j\in \mathcal{X}$,
\begin{equation}\label{limiting_i}
    \lim_{i\to \infty}\frac{\lambda_{ij}+\lambda_{ji}}{\lambda_i+\lambda_j}=0.
\end{equation}
Similarly by interchanging the roles of $i$ and $j$ in the above argument, one obtains
\begin{equation}\label{limiting_j}
    \lim_{j\to \infty}\frac{\lambda_{ij}+\lambda_{ji}}{\lambda_i+\lambda_j}=0
\end{equation}
for each $i\in \mathcal{X}$. Hence from \eqref{limiting_i}, and \eqref{limiting_j}, we conclude $\underset{i,j}{\inf}(\frac{\lambda_{ij}+\lambda_{ji}}{\lambda_i+\lambda_j})=0.$ \qed

\noindent Next we wish to investigate the eventual meeting event for semi-Markov family. Clearly, in view of Theorem \ref{theoM_meet}(2), a condition like $\inf_{(i,j)\in \mathcal{X}_2, y_1, y_2, y}\frac{(\lambda_{ij} (y_1+y)+\lambda_{ji}(y_2+y))}{\lambda_{i}(y_1+y)+\lambda_{j}(y_2+y)} >0$ is needed for this purpose. However, finiteness of $\mathcal{X}$ is not enough to ensure that. We consider the following assumption.
\begin{enumerate}
\item[{\bf (A4)}] $\mathcal{X}$ is finite and  $\sup_{(i,j)\in \mathcal{X}_2, y_1\ge 0, y_2\ge 0}\left\|1-\frac{\left(\lambda_{ij}(y_1+\cdot)+\lambda_{ji}(y_2+\cdot)\right)}{\lambda_{i}(y_1+\cdot)+\lambda_{j}(y_2+\cdot)}\right\|_{L^\infty} < 1.$
\end{enumerate}

\begin{theo}\label{SM_EM}
Assume (A1)-(A4) and that $X^1$ and $X^2$ are as in Notation \ref{N4.1}. Then $X^1$ and $X^2$ eventually meet with probability 1.
\end{theo}
\begin{proof} Using $\phi_2 (y) := 1- \frac{(\lambda_{ij}(y_1+y)+\lambda_{ji}(y_2+y))}{\lambda_{i}(y_1+y)+\lambda_{j}(y_2+y)}$, we rewrite \eqref{total2} as
\begin{align*}
a'_{(i,j,y_1,y_2)}&=\int_0^\infty \phi_1(y)\phi_2(y) dy \le \|\phi_1\|_{L^1}\|\phi_2\|_{L^\infty} = \| \phi_2\|_{L^\infty}.
\end{align*}
Now by a direct application of (A4), we get that supremum of $\|\phi_2\|_{L^\infty}$ over all $(i,j)\in \mathcal{X}_2, y_1\ge 0, y_2\ge 0$ is less than 1, which implies that
\begin{equation}\label{equivalent_assumption_2}
\sup_{(i,j)\in \mathcal{X}_2, y_1\ge 0, y_2\ge 0}  a'_{(i,j,y_1,y_2)}<1.
\end{equation}
Again as in the proof of Theorem \ref{theoM_meet}, the total probability of never meeting is the probability of intersection of occurrence of not meeting in next transition for every transition, and (A2) ensures almost sure infinite transitions. Moreover, since $(Z^1, Z^2)$ is strong Markov (Theorem \ref{theo2.1}) and $\{T_n\}_{n\ge 1}$ are stopping times $P\left(\{X^1_{T_n}\neq X^2_{T_n}\}\mid \mathcal{F}_{T_{n-1}}\right) = a'_{(X^1_{T_{n-1}},X^2_{T_{n-1}},Y^1_{T_{n-1}},Y^2_{T_{n-1}})}$ which is not more than the left side of \eqref{equivalent_assumption_2}. Therefore, in the similar line of the proof of Theorem \ref{theoM_meet}, we get
\begin{align}\label{aijn}
E\left[ \prod_{n=1}^N \mathds{1}_{\{X^1_{T_n}\neq X^2_{T_n}\}}\mid \mathcal{F}_{0}\right] \le \left(\sup_{(i,j)\in \mathcal{X}_2, y_1\ge 0, y_2\ge 0}  a'_{(i,j,y_1,y_2)}\right)^N
\end{align}
and  $P\left(X^1_t\neq X^2_t,\forall t\ge 0\mid \mathcal{F}_{0} \right) = \lim_{N\to \infty}  E\left[ \prod_{n=1}^N \mathds{1}_{\{X^1_{T_n}\neq X^2_{T_n}\}}\mid \mathcal{F}_{0}\right] $. This limit is zero from \eqref{equivalent_assumption_2} and \eqref{aijn}. Thus the probability of never meeting is zero. \qed
\end{proof}
\noindent Under (A1)-(A4), the pair $(X^1, X^2)$ not only surely meet, the expected number of transitions needed for meeting is also finite. A rather stronger result is shown below.
\begin{theo}\label{moment_r}
Assume (A1)-(A4) and that $X^1$ and $X^2$ are as in Notation \ref{N4.1}. If $N$ denotes the number of collective transitions until the first meeting time of  $X^1$ and $X^2$, then $E[N^r]<\infty$ for any $r\ge 1$.
\end{theo}
\begin{proof}
For the sake of brevity, we write $a'_{(Z^1_{T_{n}},Z^2_{T_{n}})}$ for $a'_{(X^1_{T_{n}},X^2_{T_{n}},Y^1_{T_{n}},Y^2_{T_{n}})}$, a notation that appears in the proof of Theorem \ref{theoSM_meet}.
Since $N$ denotes the number of collective transitions until the first meeting time, using the above notation and \eqref{aijn}, we get for all $n\ge 0$
\begin{align*}
P(N=n+1) =&  E\left[\left(\prod_{r=1}^n \mathds{1}_{\{X^1_{T_r}\neq X^2_{T_r}\}} \right) E\left( \mathds{1}_{\{X^1_{T_{n+1}} = X^2_{T_{n+1}}\}} \mid \mathcal{F}_{T_n} \right)\right] \\
\le& \left(\sup_{z^1,z^2} a'_{(z^1, z^2)}\right)^n \left(1- \inf_{z^1,z^2} a'_{(z^1, z^2)}\right),
\end{align*}
by following the convention that product and intersection of an empty family are 1 and empty set respectively.
Thus the $r^{th}$ raw moment, $E [N^r]$ is
\begin{align*}
\sum_{n=1}^\infty n^r P(N=n)
    \le & \left(1- \inf_{z^1,z^2} a'_{(z^1, z^2)}\right)\left(\sup_{z^1,z^2} a'_{(z^1, z^2)}\right)^{-1} \sum_{n=1}^\infty n^r \left(\sup_{z^1,z^2} a'_{(z^1, z^2)}\right)^{n}.
\end{align*}
The infinite series on the right converges provided $\sup_{z^1,z^2} a'_{(z^1, z^2)} <1$ which is ensured in \eqref{equivalent_assumption_2} due to the assumption (A4). To be more precise, that series sum is expressed as $Li_{-r}(\sup_{z^1,z^2} a'_{(z^1, z^2)})$ where $Li_r(z)$ is polylogarithm function of order $r$ and with argument $z$. Thus we conclude that $N$ has finite moments. \qed
\end{proof}

\noindent We end this section with the final result below. That requires essential infimum of at least one entry of each row of $\lambda$ to be nonzero.
\begin{theo}\label{merg_suff}
Assume (A1)-(A4) and that $X^1$ and $X^2$ are as in Notation \ref{N4.1}. Further assume that for each $k\in \mathcal{X}$ there is at least one $k'\in \mathcal{X}\setminus \{k\}$ such that $\|{\lambda_{kk'}}^{-1}\|_{L^\infty} < \infty$.
Then $X^1$ and $X^2$ eventually merge with probability 1.
\end{theo}
\begin{proof}
Since (A1)-(A4) hold, Theorem \ref{SM_EM} ensures eventual meeting with probability 1. Hence $\mathcal{T}_1<\infty$ with probability 1, where $\mathcal{T}_1$ denotes the first meeting time (see Definition \ref{meet_defi} ). If $\mathcal{T}_1$ is NMT (not a merging time), $X^1$ and $X^2$ separate at the next transition and again due to Theorem \ref{SM_EM}, they meet at $\mathcal{T}_2$, say, which is again finite almost surely. By repeating this argument, if $X^1$ and $X^2$ never merge, we obtain an infinite sequence $\{\mathcal{T}_n\}_n$ of meeting times where each of them are finite almost surely. Using $k_n:= X^1_{\mathcal{T}_n}=X^2_{\mathcal{T}_n}$ and $y_n:= \max( Y^1_{\mathcal{T}_n}, Y^2_{\mathcal{T}_n})$, we get $P\left({\{\mathcal{T}_n \textrm{ is NMT} \}} \mid  \mathcal{F}_{\mathcal{T}_n} \right) = P\left({\{\mathcal{T}_n \textrm{ is NMT} \}} \mid  k_n, y_n \right) = 1-\mathcal{P}(k_n,y_n)$, since  $\{\mathcal{T}_n\}_{n\ge 1}$ is a sequence of stopping times and $(Z^1, Z^2)$ is strong Markov.
Therefore,
\begin{align*}
 E\left[\prod_{n=1}^{N}  \mathds{1}_{\{\mathcal{T}_n \textrm{ is NMT} \}}\right]
= & E\left[\prod_{n=1}^{N-1}  \mathds{1}_{\{\mathcal{T}_n \textrm{ is NMT} \}} E\left[\mathds{1}_{\{\mathcal{T}_N \textrm{ is NMT} \}} \mid  \mathcal{F}_{\mathcal{T}_N} \right]\right]\\
& \le \left(1-\inf_{k\in \mathcal{X}, y\ge 0}\mathcal{P}(k,y)\right)E\left[\prod_{n=1}^{N-1}  \mathds{1}_{\{\mathcal{T}_n \textrm{ is NMT} \}}\right].
\end{align*}
Since the event of never merging can be expressed as $\cap_{n\ge 1}\{\mathcal{T}_n \textrm{ is NMT}\}$, an upper bounded of its probability can be obtained by using the above inequality repeatedly, i.e.,
\begin{align} \label{NoMerge}
P\left(\mathcal{T}_n \textrm{ is NMT},\forall n\ge 1\right) = \lim_{N\to \infty}E\left[\prod_{n=1}^{N}  \mathds{1}_{\{\mathcal{T}_n \textrm{ is NMT} \}}\right] \le  \lim_{N\to \infty}\left(1-\inf_{k\in \mathcal{X}, y\ge 0}\mathcal{P}(k,y)\right)^N.
\end{align}
This confirms that the probability of never merging is zero, provided $\inf_{k\in \mathcal{X}, y\ge 0}\mathcal{P}(k,y) >0$.
Since, (A3) holds, from Theorem \ref{theo4.9},
 \begin{align*}
\mathcal{P}(k,y)\ge & \int_{0}^\infty e^{-\underset{k'\neq k}{\sum}\|\lambda_{kk'}\|_{\infty}y'} \sum_{k'\neq k}\| {\lambda_{kk'}}^{-1}\|^{-1}_{\infty}dy'\\
\textrm{or, }\inf_{k\in \mathcal{X}, y\ge 0} \mathcal{P}(k,y)\ge & \left(\int_{0}^\infty e^{-Cy'} dy'\right)\min_{k\in  \mathcal{X}} \sum_{k'\neq k}\| {\lambda_{kk'}}^{-1}\|^{-1}_{\infty}.
\end{align*}
Since for each $k\in \mathcal{X}$, there is a $k'\in \mathcal{X}\setminus \{k\}$ such that $\|{\lambda_{kk'}}^{-1}\|_{L^\infty} < \infty$, and $\mathcal{X}$ is finite, the right side of above inequality is positive. Thus $\inf_{k\in \mathcal{X}, y\ge 0}\mathcal{P}(k,y) >0$ as desired. \qed
\end{proof}

\section{Conclusion}
\noindent In this paper we make use of a particular type of semimartingale representation of a class of semi-Markov processes. We have then studied various aspects of a pair of solutions having two different initial conditions. Several questions regarding the meeting and merging of stochastic flow of SMP have been answered by considering a solution pair. We have obtained explicit expressions of probabilities of many relevant events in terms of the transition rate matrix.

\noindent The study of eventual meeting and merging in Section 4 is carried out for finite state-space case. These results could be examined for certain infinite state cases, like birth-death processes, or more generally, where all entries of $\lambda$, except $k$ nearest neighbours of diagonal are zero. Apart from this, we also propose another extension. The present study which has been carried out for the time-homogeneous case, can further be  investigated for the time non-homogeneous case. It is clear that the results of Section 4 cannot be extended in a straight forward manner for this general case. We wish to pursue further research in these directions.

\section*{Acknowledgement}
\noindent \noindent The authors are grateful to Professors Mrinal K. Ghosh, and Gopal K. Basak for some helpful discussions. The authors also greatly appreciate some comments from Dr. Subhamay Saha, which helped in improving the readability. The suggestions of an anonymous referee have helped in improving the organization, presentation and focus of this paper significantly.


\end{document}